\newtheorem{theorem}{Theorem}
\newtheorem{lemma}[theorem]{Lemma}
\newenvironment{proof}[1][Proof]{\textbf{#1.} }{\ \rule{0.5em}{0.5em}}
\numberwithin{equation}{section}
\title { A note on the Ramanujan's
master theorem 
}
\author{ Lazhar Bougoffa \thanks{IIUM University, Faculty of  Science, Department of Mathematics,
Riyadh, Saudi Arabia.
E-mail address: lbbougoffa@imamu.edu.sa}}
\begin{document}
\maketitle
\begin{abstract}
In this note, it is shown  that the \emph{Ramanujan's
Master Theorem (RMT)} when $n$  is a positive integer  
 can be obtained, as  a special case, from a new integral  formula. Furthermore, we give a simple proof of the RMT when $n$ is not an  integer.
\end{abstract}
\noindent{\bf Keywords:} Cauchy-Frullani integral,
Ramanujan's master theorem, 
Euler integral, Gaussian integral.
\bigskip
\maketitle
\section{Introduction}
In this note, we prove a new integral formula for the evaluation of definite
integrals and
show that   the Ramanujan's Master Theorem
(RMT) \cite{1, 2} when $n$  is a positive integer 
can be easily derived, as a special case, from this  integral formula. This formula can be used to quickly
evaluate certain integrals not expressible in terms of elementary
functions. For $n$ is not an integer, we shall also give a simple proof of the RMT.
\section{Main result}
To clarify the procedure, we begin by considering the following
Cauchy-Frullani integral \cite{3}:
\begin{lemma}
Let $f$ be a continuous function and assume that both $f(\infty)$
and $f(0)$  exist. Then
\begin{equation}
\label{2}\int_{0}^{\infty}\frac{f(\alpha x)-f(\beta
x)}{x}dx=\left(f(\infty)-f(0)\right)\ln \frac{\alpha}{\beta}, \
\alpha, \ \beta>0.
\end{equation}
\end{lemma}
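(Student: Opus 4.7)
The plan is to regularize the integral by cutting off near $0$ and $\infty$, use a change of variables on each of the two pieces, then exploit the cancellation of the bulk portions. Concretely, for $0<\epsilon<R$, I would write
\[
\int_{\epsilon}^{R}\frac{f(\alpha x)-f(\beta x)}{x}\,dx
=\int_{\epsilon}^{R}\frac{f(\alpha x)}{x}\,dx-\int_{\epsilon}^{R}\frac{f(\beta x)}{x}\,dx
\]
and then substitute $u=\alpha x$ in the first integral and $u=\beta x$ in the second. Since $du/u = dx/x$, both integrands become $f(u)/u$ and the two integrals are taken over $[\alpha\epsilon,\alpha R]$ and $[\beta\epsilon,\beta R]$ respectively.

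The next step is the telescoping cancellation: the shared middle portion of these two ranges drops out, leaving only the thin slivers near the endpoints. Writing the result symmetrically, I expect to obtain
\[
\int_{\alpha\epsilon}^{\beta\epsilon}\frac{f(u)}{u}\,du
-\int_{\alpha R}^{\beta R}\frac{f(u)}{u}\,du,
\]
valid regardless of whether $\alpha<\beta$ or $\alpha>\beta$ (the signs take care of themselves).

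The final step is to pass to the limits $\epsilon\to 0^{+}$ and $R\to\infty$. Here I would apply the first mean value theorem for integrals: because $f$ is continuous, there exist $c_{\epsilon}$ between $\alpha\epsilon$ and $\beta\epsilon$ and $c_{R}$ between $\alpha R$ and $\beta R$ such that
\[
\int_{\alpha\epsilon}^{\beta\epsilon}\frac{f(u)}{u}\,du = f(c_{\epsilon})\ln\frac{\beta}{\alpha},
\qquad
\int_{\alpha R}^{\beta R}\frac{f(u)}{u}\,du = f(c_{R})\ln\frac{\beta}{\alpha}.
\]
As $\epsilon\to 0^{+}$ we have $c_{\epsilon}\to 0$ so $f(c_{\epsilon})\to f(0)$, and as $R\to\infty$ we have $c_{R}\to\infty$ so $f(c_{R})\to f(\infty)$, by the hypothesis that these limits exist. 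Combining yields
\[
\bigl(f(0)-f(\infty)\bigr)\ln\frac{\beta}{\alpha}
= \bigl(f(\infty)-f(0)\bigr)\ln\frac{\alpha}{\beta},
\]
which is \eqref{2}.

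The main obstacle is really a bookkeeping issue rather than an analytic one: one must justify that the combination of integrals genuinely reduces to the two sliver integrals above, independently of the ordering of $\alpha$ and $\beta$, and ensure that the mean value theorem is applied to a continuous integrand on a compact interval not containing $0$. Once the regularization $[\epsilon,R]$ is in place, both concerns are routine, and the continuity of $f$ together with the assumed existence of $f(0)$ and $f(\infty)$ supplies everything needed to take the limits.
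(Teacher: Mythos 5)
Your argument is correct: the truncation to $[\epsilon,R]$, the substitutions $u=\alpha x$ and $u=\beta x$, the reduction to the two ``sliver'' integrals $\int_{\alpha\epsilon}^{\beta\epsilon}f(u)\,du/u$ and $\int_{\alpha R}^{\beta R}f(u)\,du/u$ (valid with signed integrals for either ordering of $\alpha,\beta$), and the first mean value theorem applied with the positive weight $1/u$ all go through, and the hypotheses that $f$ is continuous with $f(0)$ and $f(\infty)$ existing are exactly what is needed to pass to the limit. Note that the paper does not actually prove this lemma --- it quotes the Cauchy--Frullani formula from the literature --- so there is no proof to compare against; what you have written is the standard classical proof and it establishes, as a byproduct, the convergence of the improper integral itself.
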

This formula was first published by Cauchy in 1823, and more
completely in 1827 with a beautiful proof.\\
The following lemma is a new helpful tool in the proof of the Ramanujan's Master Theorem \cite{1,2} and other integrals.
\begin{lemma}
Let $f\in \mathbb{C}^{n}([0, \infty))$ such that both $f(\infty)$ and
$f(0)$  exist. Then
\begin{equation}
\label{5}\int_{0}^{\infty}x^{n-1}f^{(n)}(x)
dx=(-1)^{n-1}\left[f(\infty)-f(0)\right]\Gamma(n), \ \Gamma(n)=(n-1)!.
\end{equation}
\end{lemma}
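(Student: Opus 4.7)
The natural strategy is induction on $n$, powered by a single integration by parts at each step. The base case $n=1$ is just the fundamental theorem of calculus: $\int_0^\infty f'(x)\,dx = f(\infty)-f(0)$, which matches the right-hand side since $(-1)^0\,\Gamma(1)=1$.

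For the inductive step, I would set $u=x^{n-1}$, $dv=f^{(n)}(x)\,dx$ and write
\[
\int_0^\infty x^{n-1}f^{(n)}(x)\,dx \;=\; \bigl[x^{n-1}f^{(n-1)}(x)\bigr]_0^\infty \;-\; (n-1)\int_0^\infty x^{n-2}f^{(n-1)}(x)\,dx.
\]
The boundary contribution at $x=0$ vanishes because $n-1\ge 1$ and $f^{(n-1)}(0)$ is finite by the $\mathbb{C}^n$ assumption. The remaining integral has exactly the same shape as the original with $n$ replaced by $n-1$, so the induction hypothesis gives $(-1)^{n-2}\Gamma(n-1)[f(\infty)-f(0)]$. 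Multiplying by the factor $-(n-1)$ produced by integration by parts yields $(-1)^{n-1}(n-1)\,\Gamma(n-1)[f(\infty)-f(0)] = (-1)^{n-1}\Gamma(n)[f(\infty)-f(0)]$, as required. Equivalently, one can package the iteration as $J_k := \int_0^\infty x^{k}f^{(k+1)}(x)\,dx$ and observe the one-term recursion $J_k=-kJ_{k-1}$, so that $J_{n-1}=(-1)^{n-1}(n-1)!\,J_0$.

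The main obstacle, and the only real content beyond bookkeeping, lies in justifying that each boundary term at infinity, $\lim_{x\to\infty} x^{k}f^{(k)}(x)$, is zero. The stated hypothesis gives the existence of $f(\infty)$ but says nothing direct about the derivatives, so some additional decay on $f^{(k)}$ must be tacit (or added); in the applications envisioned for the RMT, this will be automatic because $f^{(k)}(x)$ decays rapidly. Once this decay is granted, the induction goes through without further difficulty and produces precisely the stated identity with $\Gamma(n)=(n-1)!$.
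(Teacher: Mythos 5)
Your proof is correct in outline but follows a genuinely different route from the paper. The paper does not integrate by parts at all: it starts from the Cauchy--Frullani identity $\int_{0}^{\infty}\frac{f(\alpha x)-f(\beta x)}{x}\,dx=\left(f(\infty)-f(0)\right)\ln\frac{\alpha}{\beta}$ (its Lemma~1), differentiates both sides $n$ times with respect to the scale parameter $\alpha$ under the integral sign, and then substitutes $t=\alpha x$ to cancel the resulting factor $\alpha^{-n}$ on both sides. Your induction via the recursion $J_k=-kJ_{k-1}$ is more elementary and self-contained (it needs nothing beyond the fundamental theorem of calculus), and it has the virtue of making visible exactly where extra hypotheses are required: you correctly observe that the boundary terms $\lim_{x\to\infty}x^{k}f^{(k)}(x)$ are not controlled by the mere existence of $f(\infty)$ (e.g.\ $f(x)=\sin(x^{2})/x$ already defeats the $n=2$ case), and that some tacit decay of the derivatives must be assumed. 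The paper's route hides this same issue inside the unjustified differentiation under the integral sign, which likewise demands domination of the $\alpha$-derivatives of the integrand; so neither argument is fully rigorous under the stated hypotheses, but yours is the more transparent about the gap. What the paper's approach buys in exchange is a uniform mechanism: the same ``differentiate a parameter'' trick motivates the whole note and ties the lemma directly to the Frullani integral, whereas your argument treats the lemma as a standalone integration-by-parts fact.
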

\begin{proof}
Differentiating both sides of  Eq.(\ref{2}) in Lemma 1 $n-$times with respect
to $\alpha,$ and using the chain rule $\dfrac{d}{d\alpha} f(\alpha
x)=\dfrac{d}{d(\alpha x)} [f(\alpha x)]\times\dfrac{d(\alpha
x)}{d\alpha },$ we obtain
\begin{equation}
\label{6}\int_{0}^{\infty}x^{n-1}\frac{d^{n}}{d(\alpha x)^{n}}
\left[f(\alpha
x)\right]dx=(-1)^{n-1}\left[f(\infty)-f(0)\right]\frac{(n-1)!}{\alpha^{n}},
\ \alpha>0.
\end{equation}
The change of variable $t=\alpha x$ in the LHS of (\ref{6}) yields
\begin{equation}
\label{7}\frac{1}{\alpha^{n}}\int_{0}^{\infty}t^{n-1}\frac{d^{n}f(t)}{dt^{n}}
dt=(-1)^{n-1}\left[f(\infty)-f(0)\right]\frac{(n-1)!}{\alpha^{n}}, \
\alpha>0.
\end{equation}
The proof is complete.
\end{proof}
\section{Applications}
\subsection{The Ramanujan's
Master Theorem} The Ramanujan's Master Theorem \cite{1, 2} states
that
\begin{theorem}
If $F(x)$ is defined through the series expansion $F(x)=
\sum_{k=0}^{\infty}\phi(k)\frac{(-x)^{k}}{k!},$ with $\phi(0)\neq
0.$ Then
\begin{equation}
\label{13}\int_{0}^{\infty}x^{n-1}\sum_{k=0}^{\infty}\phi(k)\frac{(-x)^{k}}{k!}
dx=\Gamma(n)\phi(-n),
\end{equation}
where $n$ is a positive integer.
\end{theorem}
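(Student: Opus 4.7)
The plan is to apply Lemma~2 to a cleverly chosen auxiliary function $g$ whose $n$-th derivative reproduces $F(x)$ up to a sign. Given $F(x)=\sum_{k\ge 0}\phi(k)(-x)^{k}/k!$, I would set
$$g(x)=\sum_{k=0}^{\infty}\phi(k-n)\,\frac{(-x)^{k}}{k!},$$
where $\phi$ is tacitly extended to the negative integers $-n,-n+1,\dots,-1$; this is consistent with the RMT, since otherwise $\phi(-n)$ would not even make sense on the right-hand side of (\ref{13}).

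Next I would differentiate $g$ term-by-term $n$ times. Using $\frac{d^{n}}{dx^{n}}x^{k}=\frac{k!}{(k-n)!}x^{k-n}$ for $k\ge n$ (and $0$ otherwise), and then shifting the summation index to $j=k-n$, a short computation gives $g^{(n)}(x)=(-1)^{n}F(x)$. I would then read off $g(0)=\phi(-n)$ from the $k=0$ term, and invoke $g(\infty)=0$ as part of the convergence hypothesis implicit in the RMT. Substituting these three ingredients into the identity (\ref{5}) of Lemma~2 yields
$$(-1)^{n}\int_{0}^{\infty}x^{n-1}F(x)\,dx=(-1)^{n-1}\bigl[0-\phi(-n)\bigr]\Gamma(n)=(-1)^{n}\phi(-n)\Gamma(n),$$
and cancelling the common factor $(-1)^{n}$ gives precisely the identity (\ref{13}).

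The main obstacle is analytic rather than algebraic. Term-by-term differentiation of $g$ is routine as soon as $\phi$ does not grow too fast, but the vanishing $g(\infty)=0$ is the genuinely delicate point: it is what really makes the RMT nontrivial, and it has to be absorbed into the standing growth restrictions on $\phi$ (of Hardy type, say) that also guarantee the convergence of $F$ and of the integral on the left of (\ref{13}). Without such hypotheses the auxiliary $g$ need not decay at infinity and the whole strategy collapses; with them, the proof is essentially a one-line application of Lemma~2.
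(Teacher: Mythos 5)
Your proof is correct and is essentially the paper's own argument viewed from the other direction: the paper starts from a function $f(x)=\sum_{k\ge 0}\psi(k)(-x)^{k}/k!$ with $f(\infty)=0$, computes $f^{(n)}(x)=(-1)^{n}\sum_{k\ge 0}\psi(n+k)(-x)^{k}/k!$, applies Lemma~2, and then identifies $\phi(k)=\psi(n+k)$ so that $\phi(-n)=\psi(0)=f(0)$ --- which is exactly your auxiliary $g$ with $\psi(k)=\phi(k-n)$. Both arguments rest on the same single application of (\ref{5}) and the same tacit hypotheses (term-by-term differentiation, decay at infinity, and the extension of $\phi$ to the argument $-n$), so no further comment is needed.
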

It was widely used by the indian mathematician Srinivasa Ramanujan
(1887-1920)  to calculate definite integrals and infinite series.\\
Ramanujan asserts that his proof is legitimate with just simple assumptions \cite{1, 2}:
$(1)$ $F(x)$ can be expanded in a Maclaurin series;
$(2)$ $F(x)$ is continuous on $(0,\infty);$
$(3)$ $n>0;$ and $(4)$ $x^{n}F(x)$ tends to $0$ as $x$ tends to $\infty.$
\\
We note  below that the Ramanujan's Master Theorem  can be derived as a special case from (\ref{5}) when $n$ is a positive integer.\\
\begin{proof}(\textbf{Using (\ref{5})})
Assume that $f(x)$ is expanded in a Maclaurin series $f(x)=\sum_{k=0}^{\infty}\psi(k)\frac{(-x)^{k}}{k!},$ where  $f(0)= \psi(0)\neq 0$
and $f(x)$ tends to $0$ as $x$ tends to $\infty.$\\ A simple computation leads to  $f^{(n)}(x)=(-1)^{n}
\sum_{k=0}^{\infty}\psi(n+k)\frac{(-x)^{k}}{k!}.$\\ Substituting into  (\ref{5}), we obtain
\begin{equation}
\label{14}\int_{0}^{\infty}x^{n-1}\sum_{k=0}^{\infty}\psi(n+k)\frac{(-x)^{k}}{k!}
dx=f(0)\Gamma(n)=\psi(0)\Gamma(n).
\end{equation}
We see that, in the notation of the  Ramanujan's Master Theorem,
$\phi(k)=\psi(n+k), \ k=0,1,...$ and hence  $\phi(-n)=\psi(0), \ n \in \mathbb{N}.$ \\ This is precisely formula (\ref{13}), and the proof is complete.
\end{proof}
\subsection{Other integrals involving special functions}
\subsubsection{The Euler integral}
An immediate consequence of  (\ref{5}) is the evaluation of the following integral.
\begin{equation}
\label{15}\int_{0}^{\infty} x^{n-1}e^{-ax}dx=a^{-n}\Gamma(n), \ a>0.
\end{equation}
This integral is known as the  Euler integral representation of the gamma function. It was considered by Euler in 1729 and 1730  \cite{3}.
\\ This follows simply by letting     $f(x)=e^{-ax},\ f(0)=1, \ f(\infty)=0$ and $f^{(n)}(x)=(-a)^{n}e^{-ax}$ in (\ref{5}).
\subsection{Integral representation of the beta function}
The beta function $B(n,m)$  is defined by \cite{3}
\begin{equation}
B(n,m)=\int_{0}^{\infty}x^{n-1}\frac{1}{(1+x)^{n+m}}
dx=\frac{\Gamma(n)\Gamma(m)}{\Gamma(n+m)}, \ m,n=1,2,...,.
\end{equation}
This follows simply by letting    $f(x)=\frac{1}{(1+x)^{m}}, \
f(\infty)=0, \ f(0)=1$ and
$f^{(n)}(x)=(-1)^{n}m(m+1)...(m+n-1)\frac{1}{(1+x)^{n+m}}, \
n=1,2,...$ in (\ref{5}), and using the above property of the gamma
function.
\subsubsection{Gaussian integral }
\begin{equation}
\int_{0}^{\infty} e^{-x^{2}}dx=\frac{\sqrt{\pi}}{2}.
\end{equation}
This follows simply by letting   $f(x)=erf(x),
f'(x)=\frac{2}{\sqrt{\pi}}e^{-x^{2}}, \ erf(\infty)=1, \ n=1$  and
$erf(0)=0$ in (\ref{5}).
\subsubsection{Integral involving Hermite polynomials $H_{n}(x)$}
\begin{eqnarray}
\int_{0}^{\infty}x^{n-1}H_{n-1}(x)e^{-x^{2}}dx=\frac{\sqrt{\pi}}{2}\Gamma(n).
\end{eqnarray}
This follows simply by letting    $f(x)=erf(x)$ in (\ref{5}) and
using the Rodrigues formula for the Hermite polynomials:
\begin{eqnarray}
\frac{d^{n}f(x)}{dx^{n}}\left[erf(x)\right]=(-1)^{n-1}\frac{2}{\sqrt{\pi}}H_{n-1}(x)e^{-x^{2}}.
\end{eqnarray}
\subsubsection{Integral involving Laguerre polynomials $L_{n}(x)$}
\begin{eqnarray}
\int_{0}^{\infty}x^{n-1}L_{n}(x)e^{-x}dx=0.
\end{eqnarray}
where $L_{n-1}(x)$ are Laguerre polynomials.
This follows simply by
letting    $f(x)=x^{n}e^{-x}, \ f(\infty)=0= f(0)$  in (\ref{5})
and using the Rodrigues formula for the Laguerre polynomials:
\begin{eqnarray}
\frac{d^{n}f(x)}{dx^{n}}\left[x^{n}e^{-x}\right]=n!L_{n}(x)e^{-x}.
\end{eqnarray}
\section{A simple proof of the RMT when $n$ is not an  integer}
We now give a simple proof of the RMT when $n$ is not an  integer.\\
We recall that the Mellin transform is the integral transform defined by 
\begin{equation}
\mathscr{M}\{f(t), \ s \}=\int_{0}^{\infty}t^{s-1}f(t)dt,
\end{equation}
where $s$ is a complex number.\\
Also, the change of variable $t=e^{-x}$ transforms $\mathscr{M}\{f(t), \ s \}$ into  the two-sided Laplace transform of $f(e^{-x}).$ This can be written  as
\begin{equation}
\mathscr{M}\{f(t), \ s \}=
\mathscr{L}\{f(e^{-x}),\ s \}=\int_{-\infty}^{\infty}e^{-sx}f(e^{-x})dx.
\end{equation}
\begin{enumerate}
  \item Let
\begin{eqnarray}
f(x)=\left\{
\begin{array}{rcl}
\sum_{k=0}^{\infty}\frac{\phi(k)}{k!}(-x)^{k}, \  x\geq 0,\\
0, \ x<0.
\end{array}
\right.
\end{eqnarray}
Thus
\begin{equation}
\int_{0}^{\infty}x^{s-1}\sum_{k=0}^{\infty}\phi(k)\frac{(-x)^{k}}{k!}
dx=\int_{0}^{\infty}e^{-sx}\sum_{k=0}^{\infty}\phi(k)\frac{(-1)^{k}}{k!}e^{-kx}dx.
\end{equation}
Since $\mathscr{L}\{e^{-kx},\ s \}= \frac{1}{s+k}, \ \Re(s)>-k.$ Therefore,
\begin{equation}
\int_{0}^{\infty}x^{s-1}\sum_{k=0}^{\infty}\phi(k)\frac{(-x)^{k}}{k!}
dx=\sum_{k=0}^{\infty}\phi(k)\frac{(-1)^{k}}{k!}\frac{1}{s+k}.
\end{equation}
We recall that from the well-known functional equation $\Gamma(s+1)=s\Gamma(s),$ we have
\begin{equation}
\Gamma(s)=\frac{\Gamma(s+m+1)}{s(s+1)...(s+m)}.
\end{equation}
Thus $\Gamma(s)$ has poles at $s=-m, \ m=0,1,2,...$\\ Thus $\lim_{s\rightarrow -m}(s+m)\Gamma(s)=\frac{(-1)^{m}}{m!}$ as $s\rightarrow -m.$  Hence
$\Gamma(s)\sim \frac{(-1)^{m}}{m!}\frac{1}{s+m}.$
Consequently,
\begin{equation}
\phi(-s)\Gamma(s)\sim \phi(m)\frac{(-1)^{m}}{m!}\frac{1}{s+m} \ \mbox{as} \ s\rightarrow -m.
\end{equation}
This means that $\phi(m)\frac{(-1)^{m}}{m!}\frac{1}{s+m}$ is a singular element of the function $\phi(-s)\Gamma(s)$  at $s=-m.$ From the definition of the singular expansion
of $\phi(-s)\Gamma(s),$ we obtain
\begin{equation}
\phi(-s)\Gamma(s)\asymp \sum_{k=0}^{\infty}\phi(k)\frac{(-1)^{k}}{k!}\frac{1}{s+k}
\end{equation}
and the proof of  Ramanujan's Master Theorem is complete.
  \item Let
\begin{eqnarray}
f(x)=\left\{
\begin{array}{rcl}
\sum_{k=0}^{\infty}\phi(k)(-x)^{k}, \  x> 0,\\
0, \ x\leq 0.
\end{array}
\right.
\end{eqnarray}
Thus
\begin{equation}
\int_{0}^{\infty}x^{s-1}\sum_{k=0}^{\infty}\phi(k)(-x)^{k}
dx=\sum_{k=0}^{\infty}(-1)^{k}\phi(k)\frac{1}{s+k}.
\end{equation}
Proceeding as before,  we have
\begin{equation}
\phi(-s)(-s)!\Gamma(s)\sim \phi(m)(-1)^{m}\frac{1}{s+m} \ \mbox{as} \ s\rightarrow -m.
\end{equation}
This means that $\phi(m)(-1)^{m}\frac{1}{s+m}$ is a singular element of the function $\phi(-s)(-s)!\Gamma(s).$ From the definition of the singular expansion
of $\phi(-s)(-s)!\Gamma(s),$ we obtain
\begin{equation}
\phi(-s)(-s)!\Gamma(s)\asymp \sum_{k=0}^{\infty}(-1)^{k}\phi(k)\frac{1}{s+k}.
\end{equation}
Using the well-known property $(-z)!\Gamma(z)=\frac{\pi}{\sin \pi z}, \ z\neq 0, \pm1, \pm2,....,$ we get
\begin{equation}
\int_{0}^{\infty}x^{s-1}\sum_{k=0}^{\infty}\phi(k)(-x)^{k}
dx=\frac{\pi}{\sin \pi s}\phi(-s),
\end{equation}
which is the Hardy's version of the the RMT (Theorem (Hardy))\cite{2}.
\end{enumerate}

\end{document}